\newcommand{\vs}{\vspace{0,5cm}}
\newcommand{\R}{\ensuremath{\mathbb{R}}}
\newtheorem {proposition}  {Proposition}
\newtheorem {remark} {Remark}
\begin{document}

%\title[Hopf and Homoclinic Bifurcations in 3D Filippov Systems]{Hopf and Homoclinic Bifurcations in 3D Filippov Systems: A Case Study with the Boost Power Electronics Converter }

%\title[Hopf and Homoclinic Bifurcations on Sliding Vector Field]{Hopf and Homoclinic Bifurcations on Sliding Vector Field: A Case Study with the Boost Power Electronics Converter }

\title[Hopf and Homoclinic Bifurcations in the Boost Power Converter ]{Hopf and Homoclinic Loop Bifurcations on a DC-DC Boost Converter under a SMC Strategy}

\author[R. Cristiano, T. Carvalho, D.J. Tonon and D.J. Pagano]
{Rony Cristiano$^1$, Tiago Carvalho$^2$, Durval José Tonon$^3$ and\\ Daniel Juan Pagano$^1$}

\address{$^1$ Department of Automation and Systems,
Federal University of Santa Catarina, Florianópolis, Brazil.}

\address{$^2$ FC--UNESP, CEP 17033--360, Bauru, São Paulo, Brazil}

\address{$^3$ Institute of Mathematics and Statistics of Federal University of Goiás, CEP 74001-970, Goi\^{a}nia, Goi\'{a}s, Brazil.}

\email{rony.cristiano@ufsc.br}

\email{tcarvalho@fc.unesp.br}

\email{djtonon@ufg.br}

\email{daniel.pagano@ufsc.br}

%\subjclass[2010]{XXX, XXX}

\keywords{Filippov systems, Hopf bifurcation, Homoclinic loop, Boost converter, Sliding mode control.}
\date{}
\dedicatory{} \maketitle

%In this paper some aspects about chaotic behaviour and non trivial minimality in planar nonsmooth vector fields theory are treated. Although these themes are well known in smooth systems, in the nonsmooth context they are understudied. This paper verify the existence of chaotic system and introduce a new classe of minimal sets for planar nonsmooth vector fields. The approach is geometrical

%\begin{center}
%{\bf \large Organização do artigo:}
%\end{center}
%
%\vs
%
%$\bullet$ {\bf Resumo e Introdução}: Redigir Pagano;
%
%$\bullet$ {\bf Teoria básica, Teoria de linearizacao e forma normal} (descrever todas as tangencias. Nesse trabalho nao vamos estudar pseudo-equilibrio e aplicacao de primeiro retorno: Redigir Tiago e Durval;
%
%$\bullet$ {\bf Aplicacao no conversor}: Caso $b=0$ Redigir Tiago; Caso $b\neq 0$ Redigir Durval;
%
%$\bullet$ {\bf Simulações}: Redigir Rony;
%
%
%\vspace{2cm}
%

\begin{abstract}
In this paper, a dc-dc boost converter with sliding mode control and washout filter is analysed. This device is modelled as a three-dimensional Filippov system, characterized by the existence of sliding movement and restricted to the switching manifold. The operating point of the boost converter is a pseudo-equilibrium, and it, undergoes a subcritical Hopf bifurcation. Such a bifurcation occurs in the sliding vector field and creates, in this field, an unstable limit cycle. The limit cycle is confined to the switching manifold and disappears when it touches the visible-invisible two-fold point, resulting in a homoclinic loop which itself closes in this two-fold point.
\end{abstract}

\section{Introduction}\label{secao introducao}

Power electronics devices switched are strongly non-linear and can be modelled as piecewise smooth dynamical systems. It has been shown that this class of systems can exhibit various types of complex phenomena, including the classic bifurcations (Hopf, Saddle-Node, Homoclinic, etc.) and the typical bifurcations induced by discontinuity \cite{diBernardo-livro}.
 
In the case where the dynamical system is discontinuous piecewise smooth, orbits can be confined to the switching manifold. This phenomenon is known as sliding motion and this system class called Filippov systems \cite{Fi}. The occurrence of such a phenomenon has been reported by various applications involving sliding mode control. Here we highlight the applications in power electronics converters \cite{Ponce-Pagano2, Castilla, Siew-Chong}.

In this paper, we study the Hopf and Homoclinic loop bifurcations which occur in the sliding vector field in three-dimensional Filippov systems. For this study we consider the model of a dc-dc boost power electronics converter with sliding mode control and filter washout (SMC-Washout). This bifurcations on sliding vector field are analogous to the standard classic case. However, Homoclinic loop bifurcation differs somewhat standard, because the closing point of homoclinic loop is not on saddle equilibrium point, but in a visible-invisible two-fold singularity that has dynamics saddle in sliding region.

Dynamical systems that have a two-fold singularity possess a very rich and complex dynamics. In \cite{Jeffrey-colombo, Jeffrey-colombo-2011, J-T-T1, J-T-T2} two-fold singularities are studied and  in \cite{diBernardo-electrical-systems, Jeffrey-T-sing} applications of such theory in electrical and control systems, respectively, are exhibited.

The Hopf bifurcation is a local bifurcation in which an equilibrium point of a smooth dynamical system loses stability when a pair of complex conjugate eigenvalues crosses the imaginary axis of the complex plane. In this case, an unstable limit cycle (subcritical Hopf) or stable (supercritical Hopf) bifurcates of equilibrium point.

The Homoclinic loop bifurcation is a global bifurcation that occurs when a limit cycle collides with a saddle equilibrium point. The existence of a homoclinic orbit implies global changes in system dynamics. On bidimensional systems studied by Andronov et al. \cite{A-II}, the existence of a homoclinic orbit causes the sudden appearance of a limit cycle with same stability of the homoclinic orbit. In another sense, we can say that Homoclinic loop bifurcation is the means by which a limit cycle, created in the Hopf bifurcation for example, is destroyed. For details about the Hopf and Homoclinic loop bifurcation in piecewise smooth dynamical systems, see \cite{Kuznetsov-livro, Meiss-livro, Perko-livro}.

In the literature there are many works on Homoclinic loop bifurcation and Hopf bifurcation in non-smooth dynamical systems. Here we highlight the work \cite{Llibre, BinXu, Liping, Simpson2, Dercole, Kuznetsov, Battelli, Eu-fold-sela}. In both, the bifurcations are induced by discontinuity. For example, in Kuznetsov et al. \cite{Kuznetsov}, is studying the ``Pseudo-Homoclinic" bifurcations where a standard saddle may have a homoclinic loop containing a sliding segment. In Dercole et al. \cite{Dercole}, we emphasize the study of ``Boundary Hopf", a bifurcation of codimension 2 where the limit cycle created is linked with a Boundary Equilibrium Bifurcation (BEB) \cite{Pagano}.

The main result of this paper is the report of a new form for the Homoclinic loop bifurcation, where the closing point of homoclinic loop is a two-fold singularity. In the pioneering work, Benadero et al. \cite{Benadero} showed that this phenomenon is present in the system of interconnected power converters in an islanded direct current (DC) microgrid. What are we going do now, is a more rigorous analysis of Hopf and Homoclinic Loop bifurcations from a study case in a power electronics system. 

Following this paper, we discuss basic concepts of Filippov theory in Section \ref{secao preliminares}; we present the model of the boost converter with SMC-Washout, we analyze the tangential singularities and the dynamic of the sliding vector field in Section \ref{section analise conversor}; and finally, we demonstrate the occurrence of Hopf and Homoclinic loop bifurcations in Section \ref{secao bifurcacoes}. The bifurcations analysis is of great importance from the standpoint of global stability and of  the robust control for this type of power converter.

%%%%%%%%%%%%%%%%%%%%%%%%%%%%%%%%%%%%%%%%%%%%%%%%%%%%%%%%%%%%%%%%%%%%%%%%%%%

\section{Previous Result}\label{secao preliminares}

\subsection{Filippov's Convention}\label{Filippov}

Let $A \subset \R^3$ be an open set and $$\Sigma = \{(x,y,z)\in A \, | \, h(x,y,z)=0 \},$$ with $h(x,y,z)=z$. Clearly the \textit{switching manifold} $\Sigma$ is the separating boundary of the regions $\Sigma_+=\{(x,y,z) \in A \, | \, z > 0\}$ and $\Sigma_-=\{(x,y,z) \in A \, | \, z < 0\}$.

We define $\mathfrak{X}^r$ the space of $C^r$-vector fields on $A$ endowed with the $C^r$-topology with $r=\infty$ or $r\geq 1$ large enough for our purposes. Call \textbf{$\Omega^r$} the space of vector fields $\mathbf{f}: A \rightarrow \R ^{3}$ such that
\[
\mathbf{f}(\mathbf{x})=\left\{\begin{array}{l} \mathbf{f}^{+}(\mathbf{x}),\quad $for$ \quad \mathbf{x} \in \Sigma_+,\\ 
 \mathbf{f}^-(\mathbf{x}),\quad $for$ \quad \mathbf{x} \in \Sigma_-,
\end{array}\right.
\]
where $\mathbf{x}=(x,y,z)\in A, \mathbf{f}^{\pm}=(f_1^{\pm},f_2^{\pm},f_3^{\pm})\in \mathfrak{X}^r.$ We may consider $\Omega^r = \mathfrak{X}^r \times \mathfrak{X}^r$ endowed with the product topology  and denote any element in $\Omega^r$ by $\mathbf{f}=(\mathbf{f}^{+},\mathbf{f}^-)$, which we will accept to be multivalued in points of $\Sigma$. 

The kind of contact of smooth vector fields $\mathbf{f}^{\pm}\in \mathfrak{X}^r$ with $\Sigma$ are provided by the directional Lie derivatives: 
$$
L_{\mathbf{f}^{\pm}}h = \langle \nabla h, \mathbf{f}^{\pm}\rangle =f_{3}^{\pm} ,
$$
where $\nabla h$ and $\langle.,.\rangle$ denote the gradient of smooth function $h$ and the canonical inner product, respectively. The higher order Lie derivatives are given by $L^m_{\mathbf{f}^{\pm}}h =  \langle \nabla L_{\mathbf{f}^{\pm}}^{m-1} h, \mathbf{f}^{\pm}\rangle$.

%In this paper, we consider the Filippov's convention, see \cite{Fi}, that establish the dynamics on $\Sigma$. Related theories can be found in \cite{diBernardo-livro, Simpson, Marco-enciclopedia} and references therein. 

On $\Sigma$ we distinguish the following regions: \begin{itemize}
\item Crossing regions, defined by 
$\Sigma^{c+}= \{ \mathbf{x} \in \Sigma \, | \,
f_3^{+}(\mathbf{x})>0, f_3^-(\mathbf{x})>0 \}$ and $\Sigma^{c-} = \{ \mathbf{x} \in \Sigma \, | \,
f_3^{+}(\mathbf{x})<0, f_3^-(\mathbf{x})<0\}$;
\item Sliding region, defined by
$\Sigma^{s}= \{ \mathbf{x} \in \Sigma \, | \, f_3^{+}(\mathbf{x})<0, f_3^-(\mathbf{x})>0 \}$;
\item Escaping region, defined by
$\Sigma^{e}= \{ \mathbf{x} \in \Sigma \, | \, f_3^{+}(\mathbf{x})>0, f_3^-(\mathbf{x})<0\}$.
\end{itemize}  

When $\mathbf{x} \in \Sigma^s$, following the Filippov's convention (see \cite{Fi}), the \textit{sliding vector field} associated to $\mathbf{f}\in \Omega^r$ is the vector field  $\widehat{\mathbf{f}}^s$ tangent to
$\Sigma$ expressed in coordinates as
\begin{equation}
\widehat{\mathbf{f}}^s(\mathbf{x})= \dfrac{1}{(f_3^- - f_3^+)(\mathbf{x})} \begin{bmatrix}
(f_1^+ f_3^- - f_1^- f_3^+)(\mathbf{x})\\
(f_2^{+} f_3^- - f_2^- f_3^+)(\mathbf{x})\\
0
\end{bmatrix}.\label{eq campo filippov}
\end{equation}
Associated to \eqref{eq campo filippov} there exists the \textit{planar normalized sliding vector field}
\begin{equation}
\mathbf{f}^{s}(x,y)=\begin{bmatrix}
(f_1^+ f_3^- - f_1^- f_3^+)(x,y)\\
(f_2^+ f_3^- - f_2^- f_3^+)(x,y)
\end{bmatrix}.\label{equacao campo normalizado}
\end{equation}
Note that, if $\mathbf{x}\in \Sigma^s$ then $f_3^+(x,y)<0$ and $f_3^-(x,y)>0$. So, $(f_3^- - f_3^+)(x,y)>0$ and therefore, $\widehat{\mathbf{f}}^{s}$ and $\mathbf{f}^{s}$ are topologically equivalent in $\Sigma^s$, $\mathbf{f}^s$ has the same orientation as $\widehat{\mathbf{f}}^{s}$ and it can be C$^r$-extended to the closure $\overline{\Sigma^s}$ of $\Sigma^s$.

The points $\mathbf{q} \in \Sigma$ such that $\widehat{\mathbf{f}}^s(\mathbf{q})=\mathbf{0}$ are called \textit{pseudo-equilibria} of $\mathbf{f}$, virtual if $\mathbf{q} \in \Sigma^c$ or real if $\mathbf{q} \in \Sigma^s\cup\Sigma^e$. The points $\mathbf{p} \in \Sigma$ such that $f_3^+(\mathbf{p})\cdot f_3^-(\mathbf{p}) =0$ are called \textit{tangential singularities} of $\mathbf{f}$ (i.e., the trajectory through $\mathbf{p}$ is tangent to $\Sigma$). Furthermore, a point $\mathbf{p} \in \Sigma$ is called \textit{double tangency point} (i.e., the trajectories of both vector fields $\mathbf{f}^{\pm}$ through $\mathbf{p}$ are tangent to $\Sigma$) if $f_3^+(\mathbf{p})= f_3^-(\mathbf{p}) =0$.

 \begin{remark}
	If $\mathbf{q}=(x_q,y_q,0)\in\Sigma^s$ is a pseudo-equilibrium point and $\mathbf{p}=(x_p,y_p,0)$ is a double tangency point, then $\mathbf{f}^{s}(x_q,y_q)=\mathbf{0}$ and $\mathbf{f}^{s}(x_p,y_p)=\mathbf{0}$, i.e., the projections of pseudo-equilibrium and double tangency points on $\Sigma$ are equilibria points of planar normalised sliding vector field $\mathbf{f}^{s}(x,y)$. Furthermore, if $(x_q,y_q)$ is an equilibrium node, focus or saddle, then the pseudo-equilibrium $\mathbf{q}$ is said to be a pseudo-node, pseudo-focus or pseudo-saddle, respectively.
	
\end{remark} 

\subsection{Tangential Singularities}\label{secao-singularidades}

In our approach we deal with two important distinguished tangential singularities: the points where the contact between the trajectory of $\mathbf{f}^+$ or $\mathbf{f}^-$ with $\Sigma$ is either quadratic or cubic, which are called \textit{fold} and \textit{cusp} singularities, respectively (see \cite{Tiago-Marco-FCnaoEstavel}). Observe that this  contact is characterized by Lie's derivative, defined in the previous section. 

A point $\mathbf{p}\in \Sigma$ is a fold point of $\mathbf{f}^+$ if $L_{\mathbf{f}^{+}}h(\mathbf{p})=0$ and $L^2_{\mathbf{f}^{+}}h(\mathbf{p})\neq 0$. Moreover, $\mathbf{p}$ is a cusp point of $\mathbf{f}^+$ if $L_{\mathbf{f}^{+}}h(\mathbf{p})=L^2_{\mathbf{f}^{+}}h(\mathbf{p})=0,L^3_{\mathbf{f}^{+}}h(\mathbf{p})\neq 0$ and $\{ dh(\mathbf{p}) , d(L_{\mathbf{f}^{+}}h)(\mathbf{p}), d(L^2_{\mathbf{f}^{+}}h)(\mathbf{p}) \}$ is a linearly independent set. We define the sets of tangential singularities $S^+=\{\mathbf{p}\in \Sigma \, | \, f_3^+(\mathbf{p})=0\}$ and $S^-=\{\mathbf{p}\in \Sigma \, | \, f_3^-(\mathbf{p})=0\}$.

In $\R^3$, through a generic cusp singularity emanate two branches of fold singularities, see Figure \ref{fig cusp-fold e two-fold inicio}. In one branch it appears visible fold singularities and in the other one invisible fold singularities. 

When $\mathbf{p}$ is a fold, cusp singularity of both smooth vector fields (i.e., $\mathbf{p}$ is a double tangency point) we say that $\mathbf{p}$ is a \textit{two-fold singularity}, \textit{two-cusp singularity}, respectively. When $\mathbf{p}$ is a cusp singularity for one smooth vector field and a fold singularity for the other one, we say that $\mathbf{p}$ is a \textit{cusp-fold singularity}, see Figure \ref{fig cusp-fold e two-fold inicio}.

\begin{figure}[h]
\begin{center} 
\includegraphics[scale=0.5]{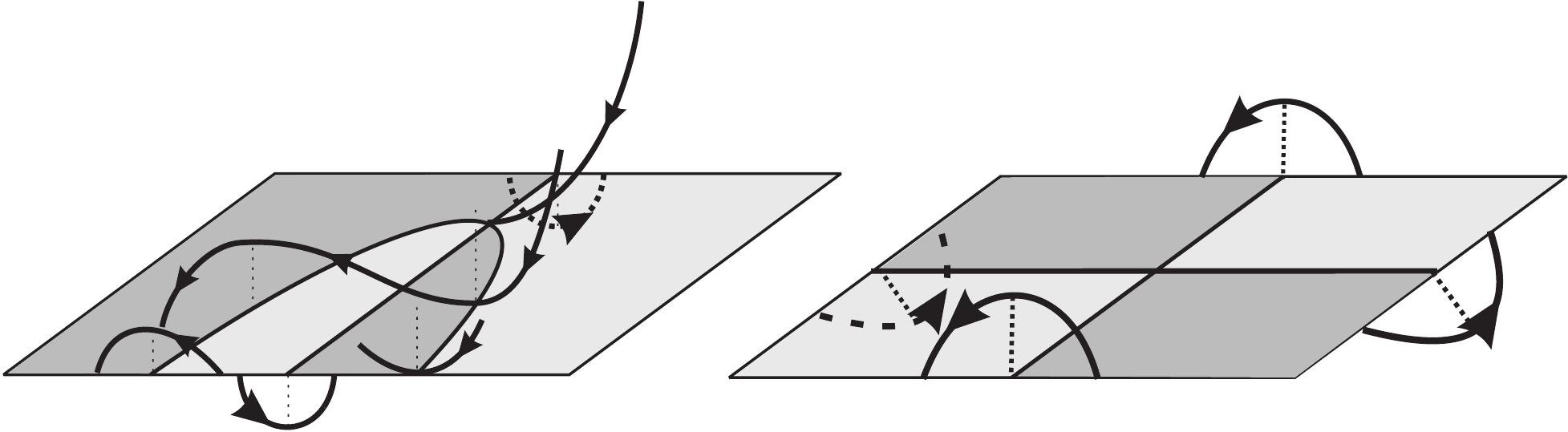}
\caption{On the left it appears a cusp-fold singularity and on the right a two-fold singularity.} \label{fig cusp-fold e two-fold inicio}
\end{center}
\end{figure}
%%%%%%%%%%%%%%%%%%%%%%%%%%%%%%%%%%%%%%%%%%%%%%%%%%%%%%%%%%%%%%%%

\section{The Boost Converter}\label{section analise conversor}

The behaviour of a dc-dc boost converter, considering the ideal case, can be studied using the circuit topology depicted in Figure \ref{fig_boost}. In our approach, the dynamical system modelling the boost converter with a SMC (Sliding Mode Control) and Washout filter, in continuous conduction mode, is given by
\begin{align}
L\dfrac{di_L}{dt}&=V_{in}-uv_C\label{eq_boost1}\\
C\dfrac{dv_C}{dt}&=ui_L-\dfrac{v_C}{R}\label{eq_boost2}\\
\dfrac{dz_F}{dt}&=\omega_F(i_L-z_F),\label{eq_boost3}
\end{align}
where $v_C$ and $i_L$ are the instantaneous capacitor voltage and the inductor current, respectively. The input voltage is assigned as $V_{in}$, $R$ is the equivalent load resistance, $C$ and $L$ are the circuit capacitor and inductor, respectively. The inductor current $i_L$ passes through a washout filter and a new variable $z_F$ is obtained by \eqref{eq_boost3}, whereas the cut-off frequency of the filter is denoted by $\omega_F$ (see \cite{Ponce-Pagano2}).

The control law is defined as $u=\frac{1}{2}(1+\text{sign}[h])$ such that $u=1$ implies that the key $S$, in Figure \ref{fig_boost}, is off and $u=0$ implies that the key $S$ is on, whereas the planar switching surface is chosen as
\begin{equation*} 
h(i_L,v_C,z_F) = v_C - V_{ref} + K(i_L-z_F),\label{eq:h}
\end{equation*}
where $K>0$ is the control parameter to be adequately tuned and $V_{ref}> V_{in}$ is the reference voltage. The goal of this procedure is to increase the input voltage $V_{in}$ until the value of reference $V_{ref}$.

\begin{figure}[h]
\begin{center} 
\includegraphics[scale=0.55]{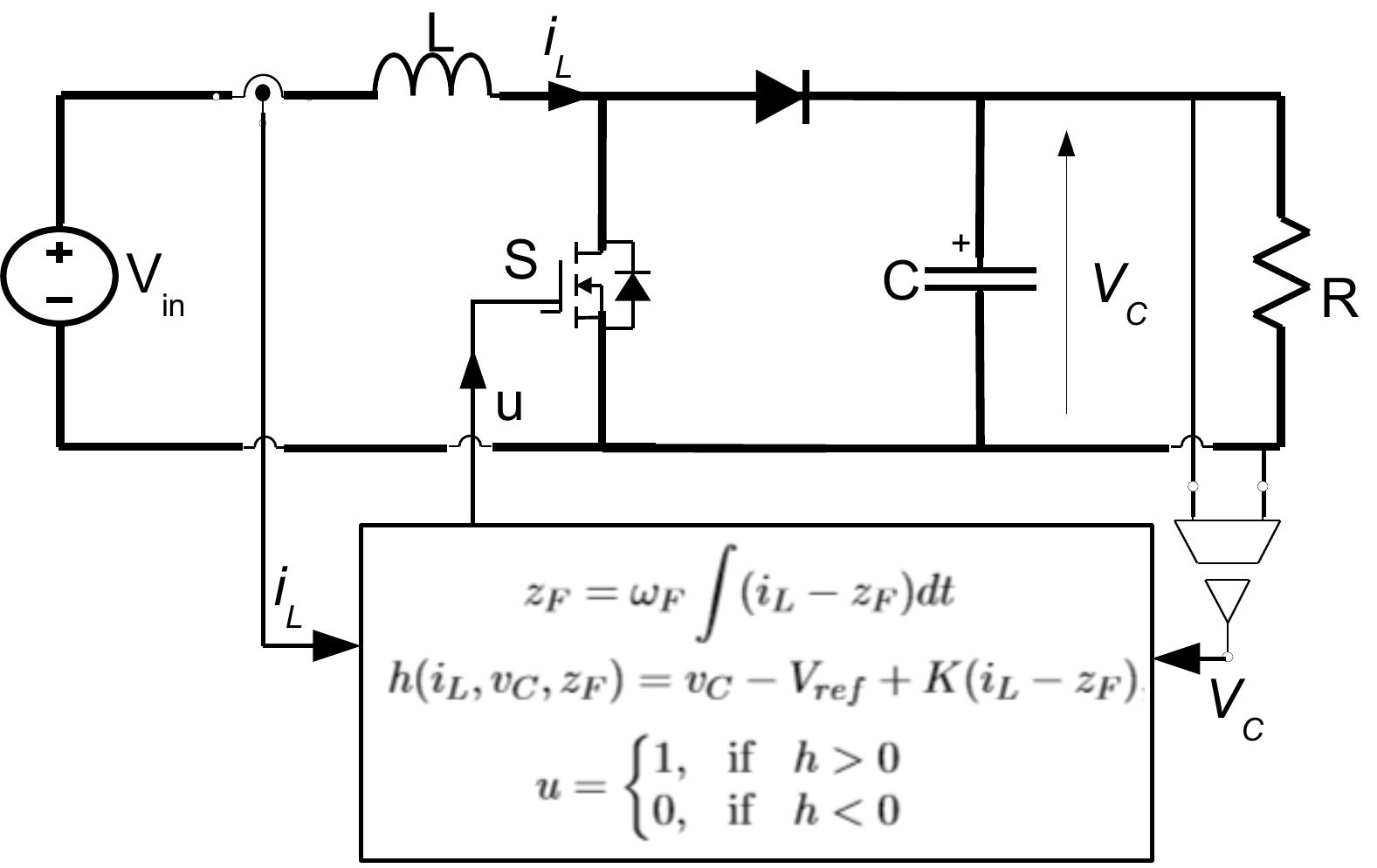}
\caption{Boost Converter with SMC-Washout.} \label{fig_boost}
\end{center}
\end{figure}

Equations \eqref{eq_boost1}-\eqref{eq_boost3} can be normalized applying the following change of variables:
$i_L=V_{in}\sqrt{\frac{C}{L}}x$, $v_C=V_{in}y$, $z_F=i_L+\frac{v_C-V_{ref}-V_{in}z}{K}$; and in the time $t=\sqrt{CL}\tau$. Defining the new parameters: $a=\frac{1}{R}\sqrt{\frac{L}{C}}$, $k=K\sqrt{\frac{C}{L}}$, $\omega=\omega_F\sqrt{LC}$ and $y_r=\frac{V_{ref}}{V_{in}}$; the dimensionless model is given by
\begin{equation}\label{eq conversor geral}
\begin{array}{ccl}
\dot{x}&=&1-uy\\
\dot{y}&=&ux-ay\\
\dot{z}&=&u(x-ky)+(\omega-a)y-\omega z+k - \omega y_r,
\end{array}\end{equation}
where $(x,y,z)\in D\subset\mathbb{R}^3$ are the independent variables and the parameters are $\omega\in(0,1]$, $y_r>1$, $k>0$ and $a>0$ (the dot $``\cdot"$ indicates $\frac{d}{d\tau}$). We stress that $x>0$ is the  normalized inductor current, $y\geq 0$ is the normalized output voltage and $z\in\mathbb{R}$ depends on the filtered current. 

For the normalized system \eqref{eq conversor geral}, the control law is defined as 
\begin{equation}
u=\frac{1}{2}(1+\text{sign}[z])\label{control}
\end{equation}
and switching manifold as $\Sigma=\{(x,y,z)\in\mathbb{R}^3: z=0\}$. The system \eqref{eq conversor geral} with control law \eqref{control} can be represented by piecewise smooth dynamical system $(\dot{x},\dot{y},\dot{z})=\mathbf{f}(x,y,z)$ with
\begin{equation}\label{eq conversor escrito PSVFgeral}
\mathbf{f}(\mathbf{x})=\left\{\begin{array}{c}
\mathbf{f}^{+}=(-1-y,x-ay,f_3^+)\;\;\;\text{if}\;\;z>0\\
\mathbf{f}^{-}=(1,-ay,f_3^-)\;\;\;\;\;\;\;\;\;\;\;\;\;\;\;\;\text{if}\;\;z<0
\end{array}\right.,
\end{equation}  
where $\mathbf{x}=(x,y,z)$ and
\begin{align*}
f_3^+(x,y,z)&=x+(\omega-a-k)y- \omega z+k-\omega y_r,\\
f_3^-(x,y,z)&=(\omega-a)y- \omega z+k-\omega y_r.
\end{align*} 

\begin{remark}
In the sequel we consider $0<a<2$. This is a coherent physical hypothesis and it is enough to produce the desired behavior. 
\end{remark}

\subsection{Tangential Singularities}

The tangential sets of $\mathbf{f}^{+}$ and $\mathbf{f}^{-}$ are given, respectively, by the \textit{straight lines}:
\begin{align*}
S^{+}&=\left\{(x,y,0)\in\Sigma : x=(a+k-\omega)y-k+\omega y_r\right\},\\
S^{-}&=\left\{(x,y,0)\in\Sigma : y=\dfrac{k-\omega y_r}{a- \omega},\;\text{for}\;a\neq \omega \right\}.
\end{align*}
The next result summarizes the possibilities of tangential singularities according to the parameters $a$, $y_r$, $k$ and $\omega$.

A straightforward calculation shows that the point $\mathbf{p}_c=(x_c,y_c,0)$ with
\begin{align*}
x_c&= \dfrac{\omega(y_r-1) + a(1 + (a + k - \omega)(k - \omega y_r))}{(a+k-\omega)(k-\omega)+1}\\
y_c&=\dfrac{(a+k-\omega)(k-\omega y_r)+1}{(a+k-\omega)(k-\omega)+1},
\end{align*}
is a \textit{cusp singularity}, since $L^2_{\mathbf{f}^{+}}h(\mathbf{p}_c)=0$ and the third Lie derivative evaluated in cusp point is given by $L^3_{\mathbf{f}^{+}}h(\mathbf{p}_c)=\omega(1-y_r)<0$, i.e., the trajectory of $\mathbf{f}^+$ passing through the cusp point $\mathbf{p}_c$ departs from $\Sigma$. The point $\mathbf{p}_c$ separates $S^{+}$ into two branches of fold singularities. The \textit{branch of visible fold singularities} for $y<y_c$ and the \textit{branch of invisible fold singularities} for $y>y_c$. 

Since $L^2_{\mathbf{f}^{-}}h(\mathbf{p})=a(k-\omega y_r)$ for all $\mathbf{p}\in S^{-}$ we get that all points in $S^{-}$ are \textit{invisible fold singularities} if $k>\omega y_r$, or \textit{visible fold singularities} if $k<\omega y_r$. %In fact, since $L^2_{\mathbf{f}^{-}}h(\mathbf{p})=a(k-\omega y_r)$ for all $\mathbf{p}\in S^{-}$.

The double tangency point, $\mathbf{p}_t$, is given by $S^{+} \cap S^{-}$, i.e., 
\begin{equation}\label{ponto-dobra-dobra}
\mathbf{p}_t = \Big( \dfrac{k (k- y_r \omega)}{a- \omega} ,  \dfrac{k-\omega y_r}{a- \omega} , 0 \Big).
\end{equation}
The point $\mathbf{p}_t$ is a two-fold singularity if $a\neq a_c(k)$ or a fold-cusp singularity if $a= a_c(k)$, for all $k\neq\omega y_r$, where
\begin{align}
a_c(k)&=\dfrac{1}{2(k - \omega y_r)}\Big[-1+\omega(k-\omega y_r)\label{ac}\\
&+\sqrt{1 + (k - \omega y_r)(2\omega + (4 + (\omega-2k)^2)(k - \omega y_r))}\Big]\nonumber
\end{align}

Table \ref{tabela-tipos-tangencia} shows the kinds of double tangency points according to the parameters $(a,k)$.

\begin{table}[!htpb]
	\centering
\begin{tabular}{|l|l|}\hline
\textbf{Kind of tangency} & \textbf{Region on the plane $(a,k)$} \\
	\hline \hline
Two-Fold Visible-Invisible & $a_c(k)<a<\omega$ for $k<\omega y_r$ \\
\hline
Two-Fold Visible-Visible & $a<a_c(k)$ for $k<\omega y_r$\\
	\hline
Two-Fold Invisible-Invisible & $\omega<a<a_c(k)$ for $k>\omega y_r$\\
	\hline
Two-Fold Invisible-Visible & $a>a_c(k)$ for $k>\omega y_r$ \\
	\hline
Fold Invisible-Cusp & $a=a_c(k)$ for $k>\omega y_r$\\
	\hline
Fold Visible-Cusp & $a=a_c(k)$ for $k<\omega y_r$\\
\hline
\end{tabular}
\vs
\caption{Kinds of tangential points according to the parameters $(a,k)$.}
\label{tabela-tipos-tangencia}
\end{table}

\begin{remark}
	Observe that, for practical reasons, $x> 0$. As consequence, in $\mathbf{p}_t$, either $a>\omega$ and $k>\omega y_r$ or $a<\omega$ and $k<\omega y_r$.
\end{remark}

\subsection{Dynamics of the sliding vector field}

The sliding vector field is calculated from the equation \eqref{eq campo filippov}, resulting in
\begin{equation}
\widehat{\mathbf{f}}^s(\mathbf{x})=\dfrac{1}{x-ky}\begin{bmatrix}
x - ay^2 + \omega y(y-y_r - z)\\
-k(x - ay^2) - \omega x (y - yr - z)\\
0
\end{bmatrix},\label{Fs}
\end{equation}
whose equilibrium point is
\begin{equation}
\mathbf{q}=(ay_r^2,y_r,0).\label{PE}
\end{equation}

This is the operation point of the boost converter with a sliding mode controller and Washout filter. This filter is responsible by the elimination of the output tension dependence in relation to the parameter of the resistive charge $a$. In this way, after a perturbation on $a$, the output tension keeps the desired value $y_r$. 

  At this moment, we have to analyze if the pseudo-equilibrium $\mathbf{q}$ is either real or virtual. In other words, we explicit conditions, under the parameters, to get $\mathbf{q}\in \Sigma^s$. In fact, these conditions are given by 
\begin{align*}
L_{\mathbf{f}^{-}}h(\mathbf{q})&=k-ay_r,\\
L_{\mathbf{f}^{+}}h(\mathbf{q})&=-(y_r-1)(k - ay_r).
\end{align*}  
  We remember that the parameter $y_r>1$. Therefore, if $k-ay_r>0$ (resp., $k-ay_r<0$) then $\mathbf{q}\in \Sigma^s$ (resp., $\mathbf{q}\in \Sigma^e$) and if $k-ay_r=0$ then $\mathbf{q}$ coincides with the double tangency point $\mathbf{p}_t$ given in \eqref{ponto-dobra-dobra}.

In the sliding mode control it is necessary that the pseudo-equilibrium (i.e., the operation point) remains in the sliding region $\Sigma^s$. So, the control parameter $k$ satisfies:
\begin{equation}
k>ay_r.\label{condicao_k_1}
\end{equation}
Moreover, the pseudo-equilibrium must be stable and without limit cycle around it. 
 
  In order to analyze the stability of the pseudo-equilibrium $\mathbf{q}$, we use the sliding vector field  $\mathbf{f}^{s}$ calculated in accordance with \eqref{equacao campo normalizado}. In this case, we obtain
 \begin{equation}
 \mathbf{f}^{s}(x,y)=\begin{bmatrix}
-x + ay^2 - \omega y(y-y_r)\\
k(x - ay^2) + \omega x (y - yr)
\end{bmatrix}.\label{Fds}
 \end{equation}
  The projection of pseudo-equilibrium $\mathbf{q}$ in the switching manifold $\Sigma$ is the point $\mathbf{q}_s=(ay_r^2,y_r)$. This point is an equilibrium of $\mathbf{f}^{s}$ and its stability can be extended to pseudo-equilibrium $\mathbf{q}$ since satisfied the condition \eqref{condicao_k_1}.

The Jacobian matrix of the normalized sliding vector field \eqref{Fds} evaluated at the point $\mathbf{q}_s$ is given by
\[J(\mathbf{q}_s)=\left(
\begin{array}{cc}
-1 & (2a - \omega)y_r \\
k & ay_r(\omega y_r-2k) \\
\end{array}
\right).
\]
Then, we calculated the determinant and trace of $J(\mathbf{q}_s)$, getting
\begin{align*}
\text{Det}[J(\mathbf{q}_s)]   &=\omega y_r (k - a y_r),\\
\text{Tr}[J(\mathbf{q}_s)]     &=-1 + ay_r(\omega y_r-2k).
\end{align*}
Imposing the condition \eqref{condicao_k_1} on the parameter $k$, then $\text{Det}[J(\mathbf{q}_s)]>0$. Therefore, the pseudo-equilibrium $\mathbf{q}$, when in $\Sigma^s$, can be a pseudo-node or pseudo-focus, stable or unstable. In this case it will be stable if, and only if, $ \text{Tr}[J(\mathbf{q}_s)]<0$, i.e., $k$ must be chosen such that it satisfies, in addition of inequality \eqref{condicao_k_1}, inequality
\begin{equation}
k>\dfrac{a \omega y_r^2-1}{2ay_r}.\label{condicao_k_2}
\end{equation}

\begin{remark}
If $k<ay_r$ the pseudo-equilibrium $\mathbf{q}$ is on the scape region $\Sigma^e$ and is a pseudo-saddle, because $\text{Det}[J^e(\mathbf{q}_s)]=\text{Det}[J(\mathbf{q}_s)]<0$, where $J^e$ is the Jacobian matrix of the normalized sliding vector field definite in $\Sigma^e$ as $\mathbf{f}^{e}(x,y)=-\mathbf{f}^{s}(x,y)$.
\end{remark}

 More precisely, to distinct if $\mathbf{q}$ is a pseudo-focus or a pseudo-node we have to analyze the signal of discriminant $\Delta$ of the characteristic polynomial of $J(\mathbf{q}_s)$. Explicitly,
\begin{align*}
\Delta    &= \text{Tr}[J(\mathbf{q}_s)]^2 - 4\text{Det}[J(\mathbf{q}_s)]\nonumber\\
&= 4a^2y_r^2k^2-4y_r(\omega+a(a\omega y_r^2-1))k+(1+a\omega y_r^2)^2.\label{discriminate-p1}
\end{align*}
This expression is a polynomial of degree two in the variable $k$. The solutions of $\Delta=0$ are given by: 
\begin{equation}
k_{\pm} = k_{H}+\dfrac{\omega\pm\sqrt{\omega(1+2a^2y_r^2)(\omega-2a)}}{2a^2y_r}\label{Kmaismenos}
,\end{equation}
where 
\begin{equation}
k_{H}=\dfrac{a \omega y_r^2-1}{2ay_r}.\label{kHopf}
\end{equation}
Note that $ k>k_{H}$ satisfies the stability condition \eqref{condicao_k_2} and $k=k_{H}$ implies $\text{Tr}[J(\mathbf{q}_s)]=0$. Furthermore, the roots $k_{\pm}$ of polynomial $\Delta=0$ exist only for $a\leq\dfrac{\omega}{2}$, otherwise we will have $\Delta>0$ for all $k$.

	 In the following, in Table \ref{tabela-estabilidade_PE}, we summarize this results about the dynamics of the sliding vector field at pseudo-equilibrium point $\mathbf{q}$. At this stability result we consider $y_r\geq\frac{2\sqrt{2}}{\omega}$, otherwise $\mathbf{q}$ is always stable for all $k>ay_r$. This condition on the parameter $y_r$ assures us the existence of the \textit{Bogdanov-Takens} bifurcation (BT) in the points $(a_-,y_ra_-)$ and $(a_+,y_ra_+)$, of plan $(a,k)$ (see Figure \ref{fig_set_bif}), such that
	 	 \begin{equation}
a_{\pm}=\dfrac{1}{4y_r}\left(\omega y_r \pm \sqrt{\omega^2y_r^2-8}\right).\label{eq_amais_menos}
\end{equation}
 
\begin{table}[!htpb]
	\centering
	\begin{tabular}{|l|l|}\hline
		\textbf{Kind of dynamics} & \textbf{Conditions under the parameters $(a,k)$} \\
		\hline \hline
pseudo-saddle  & $k<ay_r$ \\
		\hline
 stable pseudo-node & $k>ay_r$ and $a\geq\frac{\omega}{2}$, or \\
  & $k>k_+$ and $ a<\frac{\omega}{2}$, or \\
  & $ay_r<k<k_-$ and \\&$a_+< a<\frac{\omega}{2}\cup 0<a<a_-$ \\
\hline
 unstable pseudo-node & $ay_r<k<k_-$ and $a_-< a<a_+$ \\
		\hline
stable pseudo-focus  & $k_-<k<k_+$ and \\&$ a_+<a<\frac{\omega}{2}\cup 0<a<a_-$, or\\
& $k_H<k<k_+$ and $a_-< a<a_+$\\
				\hline
unstable pseudo-focus & $k_-<k<k_H$ and $a_-< a<a_+$\\
		\hline
	\end{tabular}\vs
	\caption{Kinds of dynamics of the sliding vector field at pseudo-equilibrium point $\mathbf{q}$, according the parameters $(a,k)$.}
	\label{tabela-estabilidade_PE}
\end{table}

Now we study the Hopf and Homoclinic bifurcations for the sliding vector field from the boost converter system with SMC-Washout. 
 
%%%%%%%%%%%%%%%%%%%%%%%%%%%%%%%%%%%%%%%%%%%%%%%%%%%%%%%%%%%%%%%%%%%%%%%%

\section{A Hopf Bifurcation followed by a Homoclinic Loop at the Two-Fold Singularity}\label{secao bifurcacoes}

In this section we analyze two bifurcations occurring in the sliding vector field \eqref{Fs}. First a Hopf Bifurcation takes place giving rise to a limit cycle. After, this limit cycle increases and then collides with the two-fold point, which behaves like a saddle. This meeting produces a homoclinic loop destroying the limit cycle.

\subsection{The Hopf Bifurcation}\label{subsecao bifurcacao de hopf}

In the previous section we proved that  $\mathbf{q}$ is an unstable focus when $k_-<k<k_H$ and a stable one when $k_H<k<k_+$, since $a_-< a<a_+$. Then we can state the following result:

\begin{proposition}\label{proposicao bif hopf}
	If  $k=k_H$ and $a\in (a_-,a_+)$, where $a_{\pm}$ are given in \eqref{eq_amais_menos} and $k_H$ is given in \eqref{kHopf}, then a subcritical Hopf bifurcation
	occurs at $\mathbf{q}=(a y_r^2,y_r,0)$ in the sliding vector field \eqref{Fs}.
\end{proposition}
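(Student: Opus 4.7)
The strategy is to apply the classical planar Hopf bifurcation theorem to the normalized sliding vector field $\mathbf{f}^{s}$ from \eqref{Fds} at the equilibrium $\mathbf{q}_s=(ay_r^2,y_r)$, taking $k$ as the bifurcation parameter. Any Hopf bifurcation obtained for $\mathbf{f}^{s}$ transfers to $\widehat{\mathbf{f}}^{s}$ at $\mathbf{q}$ because the two fields are topologically equivalent on $\Sigma^s$; a direct check using \eqref{kHopf} and \eqref{eq_amais_menos} shows $k_H>ay_r$ precisely when $a\in(a_-,a_+)$, so the pseudo-equilibrium $\mathbf{q}$ indeed lies in $\Sigma^s$ throughout the relevant parameter range.

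The eigenvalue and transversality conditions are almost immediate from the computations already available. By the definition \eqref{kHopf} of $k_H$ one has $\mathrm{Tr}[J(\mathbf{q}_s)]=0$ at $k=k_H$, while $\mathrm{Det}[J(\mathbf{q}_s)]=\omega y_r(k_H-ay_r)>0$ on $(a_-,a_+)$, so $J(\mathbf{q}_s)|_{k_H}$ has a pair of purely imaginary eigenvalues $\pm i\omega_0$ with $\omega_0=\sqrt{\omega y_r(k_H-ay_r)}>0$. Transversality follows because $\mathrm{Tr}[J(\mathbf{q}_s)]=-1+ay_r(\omega y_r-2k)$ is affine in $k$ with nonzero slope $-2ay_r$, giving $\tfrac{d}{dk}\mathrm{Re}\,\lambda(k)|_{k=k_H}=-ay_r<0$.

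The heart of the argument is the nondegeneracy step: computing the first Lyapunov coefficient $\ell_1$ and proving $\ell_1>0$, which identifies the bifurcation as subcritical. I would translate $\mathbf{q}_s$ to the origin via $(x,y)=(ay_r^2+X,y_r+Y)$ and then perform the linear change of variables that sends $J(\mathbf{q}_s)|_{k_H}$ to the real canonical form $\bigl(\begin{smallmatrix}0&-\omega_0\\\omega_0&0\end{smallmatrix}\bigr)$. Since the components of $\mathbf{f}^{s}$ are polynomials of total degree $2$ in $(x,y)$, \emph{all} third-order partial derivatives vanish identically; consequently the standard Lyapunov formula (see \cite{Kuznetsov-livro, Perko-livro}) collapses to the part that couples the quadratic terms to each other, and after substituting $k=k_H$ yields $\ell_1$ as an explicit rational function of $(a,y_r,\omega)$ alone.

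The genuine obstacle is the final sign check: showing that this rational expression is strictly positive for every $a\in(a_-,a_+)$. My plan is to clear the positive denominators (whose sign is controlled by $\omega_0>0$) and reduce the problem to proving that a single polynomial in $a$, with coefficients depending on $y_r$ and $\omega$, keeps constant positive sign on the open interval bounded by the two Bogdanov--Takens values $a_\pm$. I expect this to be feasible by combining the explicit bounds $0<a_-<a_+<\omega/2\le 1/2$ with the standing hypothesis $y_r\ge 2\sqrt{2}/\omega$ that ensures $(a_-,a_+)$ is nonempty; if the resulting polynomial does not reveal its sign after inspection, evaluating it at the midpoint of $(a_-,a_+)$ together with a monotonicity estimate on its derivative should finish the proof.
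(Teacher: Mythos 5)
Your proposal follows essentially the same route as the paper: both verify $\mathrm{Tr}[J(\mathbf{q}_s)]\big|_{k=k_H}=0$, $\mathrm{Det}[J(\mathbf{q}_s)]\big|_{k=k_H}>0$ and the transversal crossing $\tfrac{d}{dk}\mathrm{Tr}[J(\mathbf{q}_s)]=-2ay_r\neq 0$ for the planar normalized sliding field at $\mathbf{q}_s$, then decide sub/supercriticality via the sign of the first Lyapunov quantity of the translated quadratic system (the paper uses Andronov's $\sigma$ from \cite{A-II}, p.~253, while you use the equivalent Kuznetsov--Perko coefficient $\ell_1$), and finally transfer the conclusion to \eqref{Fs} by the topological equivalence on $\Sigma^s$. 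The one step you leave as a plan --- the strict positivity of $\ell_1$ for all $a\in(a_-,a_+)$ --- is exactly the point the paper itself settles only by displaying the explicit expression for $\sigma$ and asserting $\sigma>0$, so your outlined sign analysis is consistent with, and if anything more explicit than, the published argument.
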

\begin{proof}
For this proof, we consider the planar sliding vector field \eqref{Fds} that is topologically equivalent to \eqref{Fs} in $\Sigma^s$ and the projection of pseudo-equilibrium $\mathbf{q}$ in $\Sigma$ given by $\mathbf{q}_s=(a y_r^2,y_r)$.

The necessary conditions to get a Hopf bifurcation are satisfy for $k=k_H$:
\begin{align*}
\text{Det}[J(\mathbf{q}_s)]\Big|_{k=k_H}&=\dfrac{\omega(-1+\omega y_r^2a-2y_r^2a^2)}{2a}>0\\
\text{Tr}[J(\mathbf{q}_s)]\Big|_{k=k_H}&=0\\
\dfrac{d\text{Tr}[J(\mathbf{q}_s)]}{dk}\Big|_{k=k_H}&=-2ay_r\neq 0;
\end{align*}
since $a_-<a<a_+$. Then let us consider the system 
\begin{equation}\label{eq transladado}
\begin{array}{ll}
\dot{u}    &=u + v (\omega (v + y_r) - a (v + 2 y_r))\\ \\
\dot{v}  &=-v \omega (u + a y_r^2) + k (-u + a v (v + 2 y_r)),
\end{array}
\end{equation}
obtained from a translation of \eqref{Fds} in such a way that $\mathbf{q}_s$ is translated to the origin.
		
According to \cite{A-II}, pg 253, if the number 
\begin{equation*}
\begin{array}{ll}
\sigma    &=-\frac{1} {y_r (2 a-\omega) \sqrt{\frac{\omega^3 (a y_r^2 (\omega-2 a)-1)^3}{a^3}}}  (3 \sqrt{2} \pi  \omega (-2 a^2 k^2 y_r\\\\
&+a (2 k^3+k^2 \omega y_r+k-2 \omega y_r)+\omega (k+\omega y_r)))
\end{array}
\end{equation*}
is not null, then a Hopf bifurcation occurs at the origin in the planar analytic system \eqref{eq transladado}. Moreover, we have $\sigma >0$ for all $a\in (a_-,a_+)$ and, thus, a subcritical Hopf bifurcation occurs when $k=k_H$. Therefore, an unique unstable limit cycle bifurcates from the point $\mathbf{q}$ in the sliding vector field \eqref{Fs} (see Figures \ref{fig_uLC}-\ref{fig_uLC_3D}). So the result is proved.  
% In fact, by Table \ref{tabela-estabilidade_PE} we get that in this interval where the parameter $a$ vary, the pseudo equilibrium $\mathbf{q}_s$ is a stable pseudo-focus (for $k_H<k<k_+$) and is a unstable pseudo-focus for $k_-<k<k_H$.
\end{proof}

 Figures \ref{fig_uLC} and \ref{fig_uLC_3D} show the phase portrait of \eqref{Fds} and a simulation of the behavior of the boost converter with the SMC-Washout given by \eqref{eq conversor escrito PSVFgeral}, respectively. On both we note the existence of a limit cycle $C\subset \Sigma^s$ around the stable focus $\mathbf{q}\in \Sigma^s$. The red closed curve, the blue point and the green point represent the unstable limit cycle (uLC) $C$, the pseudo-equilibrium $\mathbf{q}$ and the two-fold point $\mathbf{p}_t$, respectively. The parameter values used in the simulation are: $\omega=1$, $y_r=4$, $a=0.2$ and $k=1.5$.
 
\begin{figure}[h]
\begin{center}
\subfigure[Projection on $\Sigma$.]{
\includegraphics[scale=0.5]{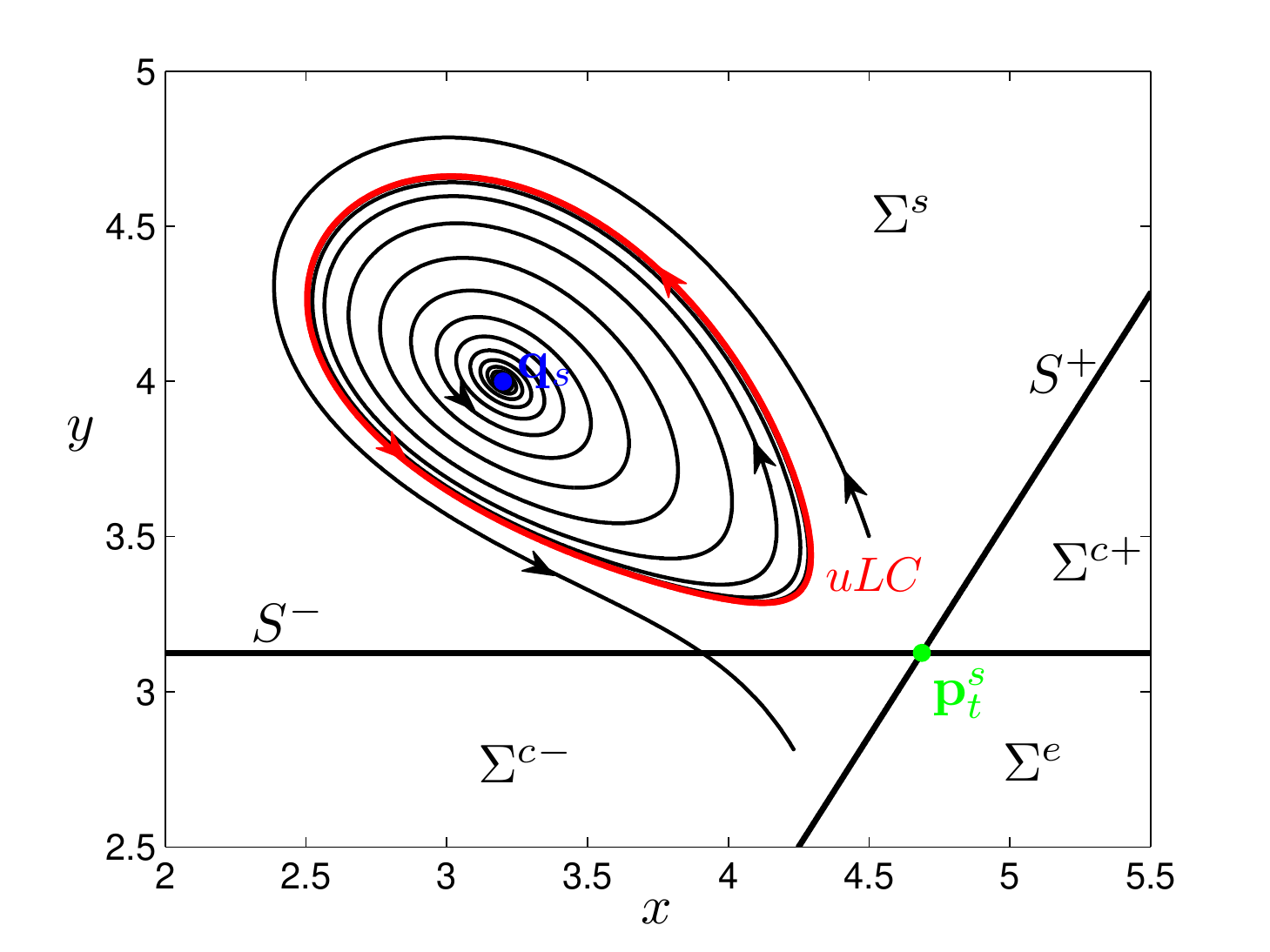}\label{fig_uLC}}
\subfigure[Simulation result of Boost converter with SMC-Washout.]{
\includegraphics[scale=0.6]{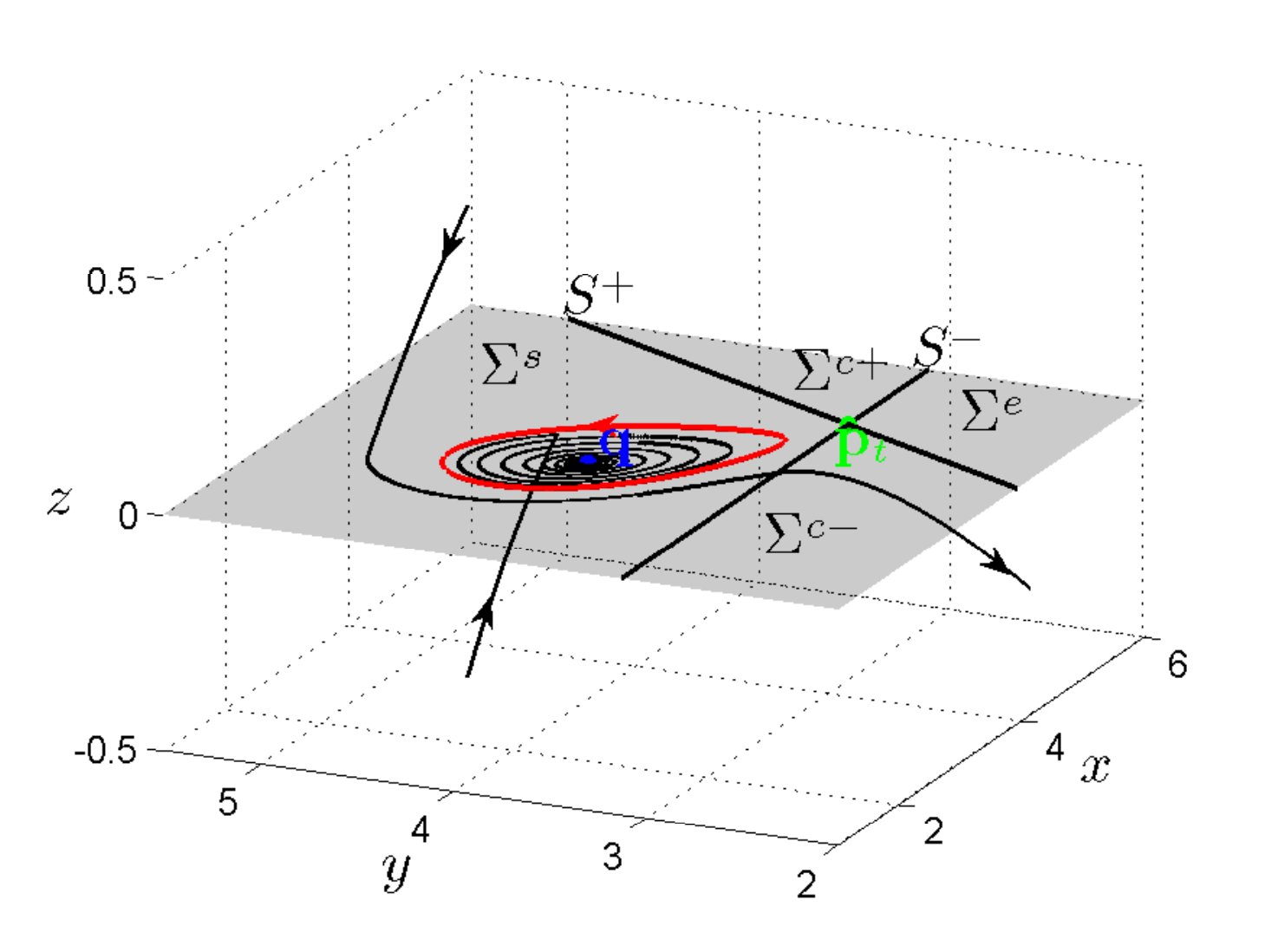}\label{fig_uLC_3D}}
\caption{Unstable limit cycle in the nonsmooth vector field \eqref{eq conversor escrito PSVFgeral}.}
\end{center}
      \end{figure}
      
%      \begin{figure}[h]
%\begin{center}
%\includegraphics[scale=0.5]{figures/fig_uLC_k15_a02_3D.pdf}
%\caption{Unstable limit cycle in the nonsmooth vector field \eqref{eq conversor escrito PSVFgeral}.}\label{fig_uLC_3D}
%\end{center}
%      \end{figure}

\subsection{The Homoclinic Loop Bifurcation}\label{subsecao bifurcacao loop}

The unique limit cycle $C$ emerged from the Hopf Bifurcation of Subsection \ref{subsecao bifurcacao de hopf} increases until to collapse with the two-fold point. This is the content of the next proposition.

\begin{proposition}\label{proposicao-bif-homoclinic}
The limit cycle $C$ emerged from the Hopf Bifurcation in Proposition \ref{proposicao bif hopf} increases until to collapse with the visible-invisible two-fold point $\mathbf{p}_t$ giving rise to a homoclinic loop.
\end{proposition}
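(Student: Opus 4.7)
The plan is to work exclusively with the planar normalized sliding field $\mathbf{f}^s$ of \eqref{Fds}, which is topologically equivalent to $\widehat{\mathbf{f}}^s$ on $\Sigma^s$ and, by the remark following \eqref{equacao campo normalizado}, admits both $\mathbf{q}_s=(ay_r^2,y_r)$ and the planar projection of $\mathbf{p}_t$ as equilibria. The proof then splits into three steps: first, identify $\mathbf{p}_t$ as a hyperbolic saddle of $\mathbf{f}^s$; second, continue the unstable cycle $C(k)$ born at the Hopf value $k_H$; third, force its termination to be a collision with that saddle.

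For the first step I would substitute the coordinates of $\mathbf{p}_t$ from \eqref{ponto-dobra-dobra} into the Jacobian of \eqref{Fds} and verify that $\det J(\mathbf{p}_t)<0$ throughout the visible-invisible region $a_c(k)<a<\omega,\ k<\omega y_r$ listed in Table \ref{tabela-tipos-tangencia}. This produces a hyperbolic saddle with one-dimensional manifolds $W^{s}(\mathbf{p}_t)$ and $W^{u}(\mathbf{p}_t)$. Because $\mathbf{p}_t$ is visible-invisible, the analysis in \cite{J-T-T1,Jeffrey-T-sing} guarantees that exactly one branch of each manifold enters $\Sigma^s$, providing the ``saddle dynamics in the sliding region'' advertised in the Introduction.

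For the second step, Proposition \ref{proposicao bif hopf} is subcritical ($\sigma>0$), so the unstable cycle $C(k)$ exists on the side $k>k_H$ on which $\mathbf{q}_s$ is a stable focus, surrounds $\mathbf{q}_s$, and shrinks to $\mathbf{q}_s$ as $k\searrow k_H$. The implicit function theorem applied to the Poincar\'e first-return map on any section transverse to $C(k_H^+)$ shows that the branch $k\mapsto C(k)$ extends $C^r$-smoothly, with $C(k)\subset\overline{\Sigma^s}$, for as long as the cycle stays at positive distance from the singular locus of $\mathbf{f}^s$. A direct resolution of $\mathbf{f}^s=\mathbf{0}$ yields exactly two equilibria of $\mathbf{f}^s$ with $y>0$, namely $\mathbf{q}_s$ and $\mathbf{p}_t$; and along the regular (visible) fold arcs of $\partial\Sigma^s\subset S^+\cup S^-$ trajectories of $\mathbf{f}^s$ leave $\Sigma^s$ in positive time, so no closed orbit can be tangent to, or cross, these arcs.

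The third step then forces the branch $C(k)$ to terminate by passing through $\mathbf{p}_t$: at the critical value $k=k_{\mathrm{hom}}$, the closed curve $C(k_{\mathrm{hom}})$ contains the saddle, and by uniqueness of solutions of $\mathbf{f}^s$ it must coincide with the union of the sliding branches of $W^{u}(\mathbf{p}_t)$ and $W^{s}(\mathbf{p}_t)$, that is, a homoclinic loop at the two-fold. The main obstacle is to rule out alternative deaths of the branch $C(k)$ (a saddle-node of cycles, the second Hopf at $k=k_+$, or an interaction with a fold arc before reaching $\mathbf{p}_t$) and to confirm the monotone growth of its amplitude. I would address this by combining the sign of the first Lyapunov coefficient $\sigma$ from Proposition \ref{proposicao bif hopf}, which yields local monotone growth of $C(k)$, with a numerical continuation of $C(k)$ and of the unstable separatrix of $\mathbf{p}_t$ at the reference parameters $\omega=1,\ y_r=4,\ a=0.2$ used for Figures \ref{fig_uLC}--\ref{fig_uLC_3D}, exhibiting explicitly the coincidence at some $k_{\mathrm{hom}}\in(k_H,\omega y_r)$.
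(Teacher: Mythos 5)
Your first two steps track the paper's proof closely: the paper likewise verifies that the projection $\mathbf{p}_t^s=(ky_t,y_t)$ of the two-fold is an equilibrium of the normalized field \eqref{Fds} with $\text{Det}[J(\mathbf{p}_t^s)]=-\omega(k-ay_r)y_t<0$ for $k>ay_r$, hence a saddle, and checks via Table \ref{tabela-tipos-tangencia} that along the homoclinic curve the tangency is of visible-invisible type. Your computation that $\mathbf{f}^s=\mathbf{0}$ has exactly the two relevant zeros $\mathbf{q}_s$ and $\mathbf{p}_t^s$ is a useful addition that the paper only implicitly uses.

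The gap is in your third step. The decisive question is why the branch $C(k)$ must terminate \emph{at the saddle} rather than in a saddle-node of cycles, at the second Hopf value $k_+$, by escaping $\overline{\Sigma^s}$, or by becoming unbounded -- and you explicitly defer this to ``numerical continuation,'' which is a verification at one parameter point, not a proof. The paper's key (and only) tool here is \emph{Perko's Planar Termination Principle} \cite{Perko-1,Perko-2}: for a planar analytic system whose two components are relatively prime (which the paper checks for \eqref{Fds}), a maximal one-parameter family of limit cycles is either unbounded in a suitable sense or terminates at a critical point or on a separatrix cycle; combined with the uniqueness of the cycle from Proposition \ref{proposicao bif hopf}, this is what licenses the conclusion that $C$ dies in a loop through the saddle $\mathbf{p}_t^s$. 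Without citing this principle (or an equivalent global continuation theorem), your argument does not close; your appeal to the sign of $\sigma$ only gives local monotonicity of the amplitude near $k_H$ and says nothing about the global fate of the branch. If you restructure step three around Perko's principle -- and then use your enumeration of equilibria and your observation about trajectories leaving $\Sigma^s$ through the fold arcs to exclude the other terminations the principle allows -- you would in fact obtain an argument somewhat more complete than the one printed in the paper.
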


\begin{proof}
According to Table \ref{tabela-tipos-tangencia}, the double tangency point  $\mathbf{p}_t$ is a visible-invisible two-fold singularity whenever $a_c(k)<a<\omega$ and $k<\omega y_r$. Note that the Homoclinic bifurcation curve in plan $(a, k)$ is contained in the quadrant $a_-<a<a_+$ and $y_ra_-<k<y_ra_+$ (see Firgure \ref{fig_set_bif}). As $a_+<\frac{\omega}{2}$ and $a_->a_c$, then the double tangency point is classified as visible-invisible two-fold when the Homoclinic loop bifurcation occurs.

Moreover, the projection of the point $\mathbf{p}_t$ on the switching manifold $\Sigma$ is the point $\mathbf{p}_t^s=(ky_t,y_t)$, where $y_t>0$ is the coordinate $y$ of the double tangency point $\mathbf{p}_t$ given in \eqref{ponto-dobra-dobra}. The point $\mathbf{p}_t^s$ is an equilibrium of the planar normalized sliding vector field $\mathbf{f}^s$, whose dynamics in the your neighborhood in $\Sigma^s$ is saddle type whenever the $\mathbf{q}\in\Sigma^s$, because
\begin{equation*}
\text{Det}[J(\mathbf{p}_t^s)]=-\omega (k-ay_r)y_t<0 
\end{equation*}  
  for $k>ay_r$. Therefore it is natural that the homoclinic loop passes through this point. 
  
Since the two coordinates of the system \eqref{Fds} have no roots in common and the cycle emerged from Proposition \ref{proposicao bif hopf} is unique, we are able to use the \textit{Perko's Planar Termination Principle} (see \cite{Perko-1,Perko-2}) in order to ensure that $C$ collapses with the visible-invisible two-fold point $\mathbf{p}_t$ giving rise to a homoclinic loop. See Figures \ref{fig_HC}-\ref{fig_HC_3D}.

\end{proof}

Figures \ref{fig_HC} and  \ref{fig_HC_3D} illustrate the phase portrait of \eqref{Fds} and the simulations of the boost converter with SMC-Washout given by \eqref{eq conversor escrito PSVFgeral}, respectively. On both we note the homoclinic loop (purple curve) passing through $\mathbf{p}_t$. In Figure \ref{fig_HC} we observe that  $\mathbf{p}_t$ is a saddle equilibrium and is the closing point of the orbit homoclinic. In Figure \ref{fig_HC_3D} we observe that it is a visible-invisible two-fold singularity of the model. The parameter values used in the simulation are: $\omega=1$, $y_r=4$, $a=0.2$ and $k=1.573$.
      
\begin{figure}[h!]
\begin{center}
\subfigure[Projection on $\Sigma$.]{
\includegraphics[scale=0.5]{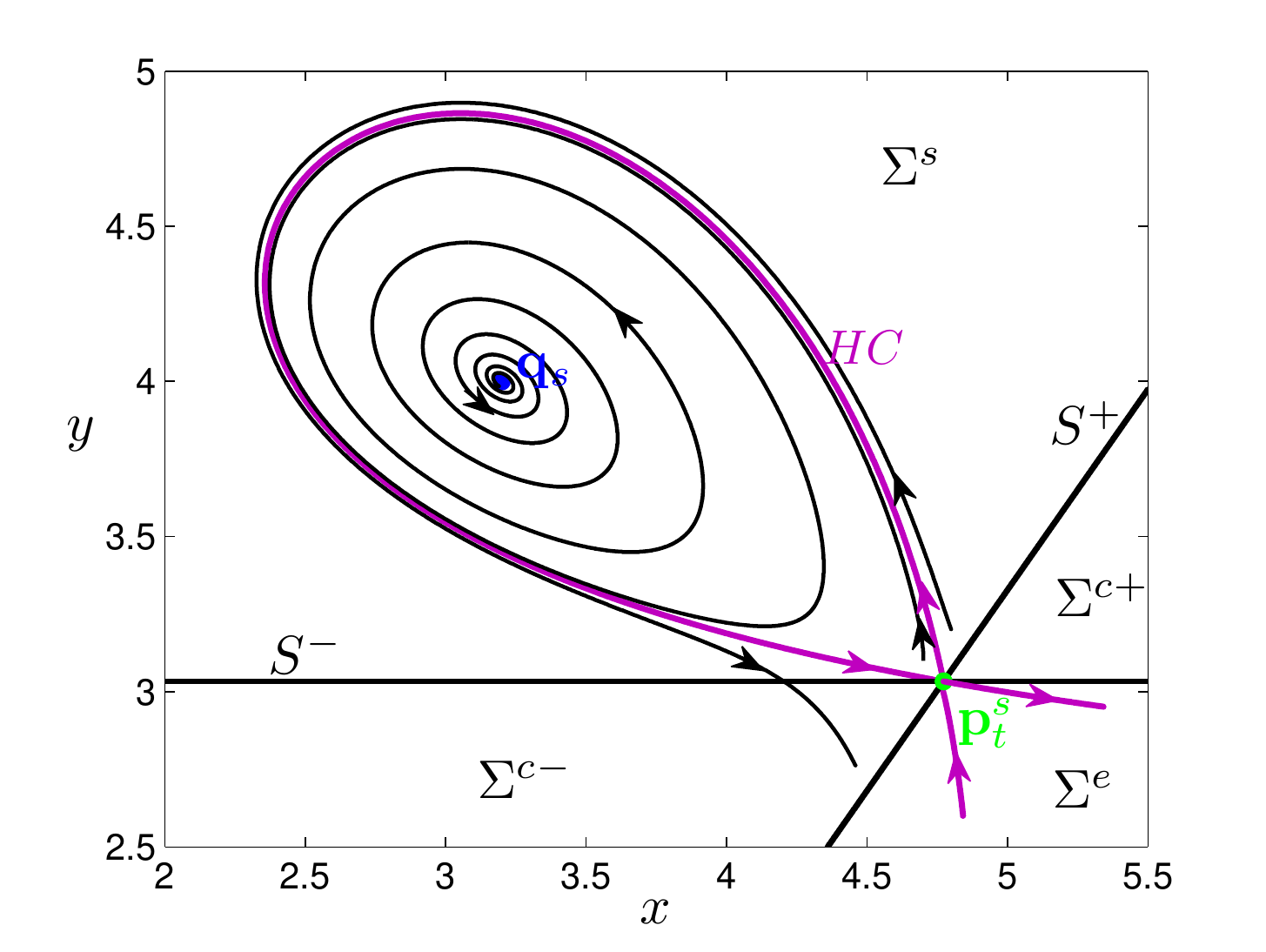}\label{fig_HC}}
\subfigure[Simulation result of Boost converter with SMC-Washout.]{
\includegraphics[scale=0.6]{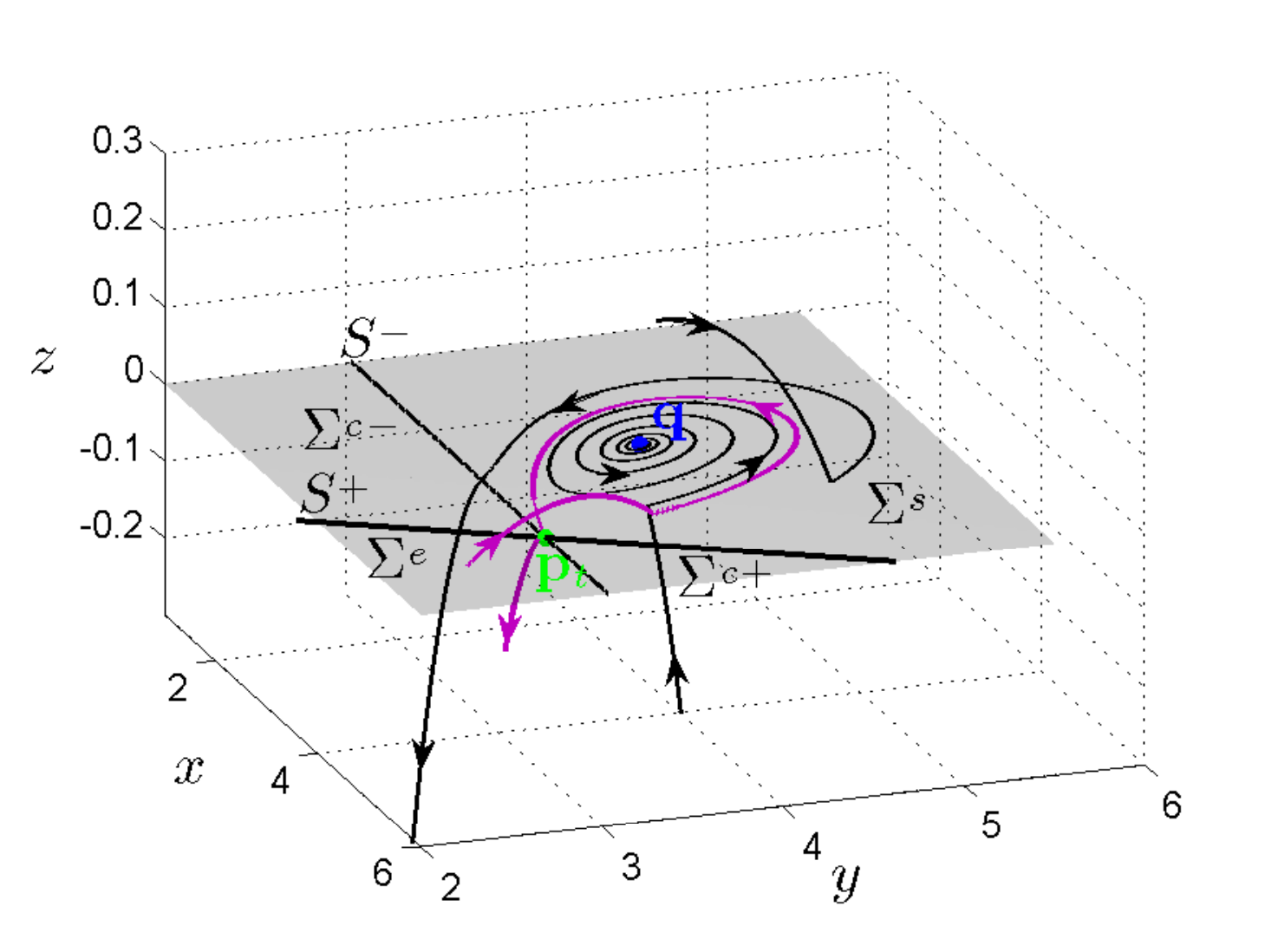}\label{fig_HC_3D}}
\caption{Homoclinic loop at the two-fold point.}
\end{center}
      \end{figure}
      
%      \begin{figure}[h!]
%\begin{center}
%\includegraphics[scale=0.5]{figures/fig_HC_k1573_a02_3D.pdf}
%\caption{Homoclinic loop at the two-fold point.}\label{fig_HC_3D}
%\end{center}
%      \end{figure}

\begin{remark}
Observe that the homoclinic loop $L_0$ is \textit{simple} since
	\[\sigma_0= \text{Tr}[J(\mathbf{p}_t^s)]= \dfrac{k(2a - \omega)(\omega y_r-k)}{a-\omega} -1\neq 0.\]By a result on page 304 of \cite{A-II}, the homoclinic loop $L_0$ repels all of the trajectories in some inner neighborhood of $L_0$ because the limit cycle that collides with the invariant eigenvectors of $\mathbf{p}_t^s$ (a saddle point) is an unstable limit cycle, that approach continuously to the stable and unstable separatrixs of $\mathbf{p}_t^s$.

Moreover, as expected, from this homoclinic loop can bifurcates only one limit cycle, as proved in a result on page 309 of \cite{A-II}.
\end{remark}

\subsection{Final Remarks}
The unstable limit cycle emerging from the Hopf bifurcation disappears in the Homoclinic loop bifurcation (see the bifurcation diagrams of Figure \ref{fig_diag_bif}). In these diagrams we get: the black curve representing the amplitude of of the unstable limit cycle; the blue straight line representing the coordinates $x$ and $y$ of $\mathbf{q}$, which is an unstable focus in the dashed region and a stable one at the not dashed region; the green dashed curve represents the coordinates $x$ and $y$ of the two-fold singularity $\mathbf{p}_t$; the red point indicates the subcritical Hopf bifurcation ($H_{sub}$), where the unstable limit cycle born; and the purple point indicates the Homoclinic loop bifurcation (HC), where the unstable limit cycle collides with the two-fold singularity $\mathbf{p}_t$ and disappears.
 
 \begin{figure}[h]
 	\begin{center}
 		\subfigure[$(x,k)-$plane]{
 			\includegraphics[scale=0.415]{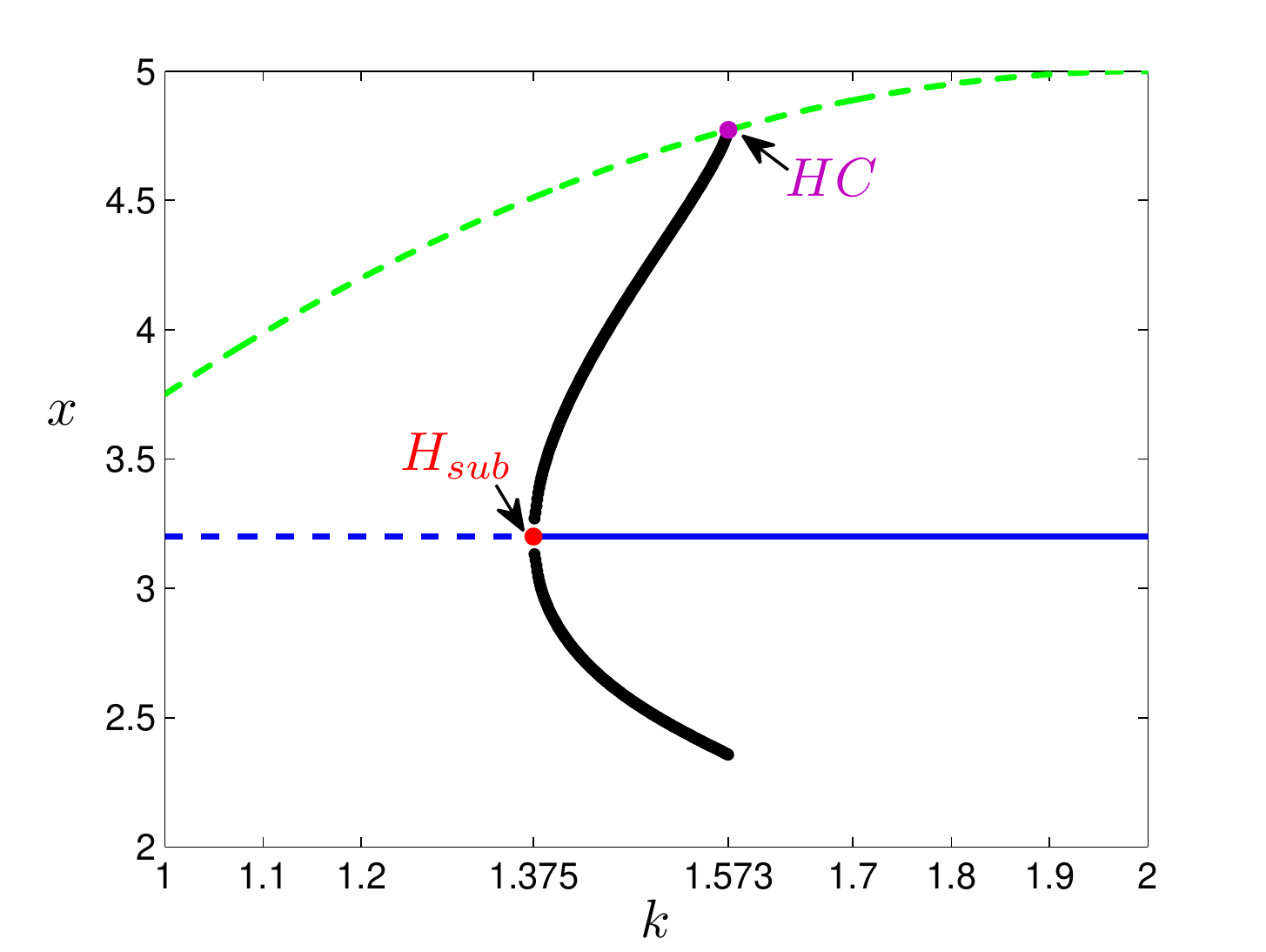}}
 		\subfigure[$(y,k)-$plane]{
 			\includegraphics[scale=0.415]{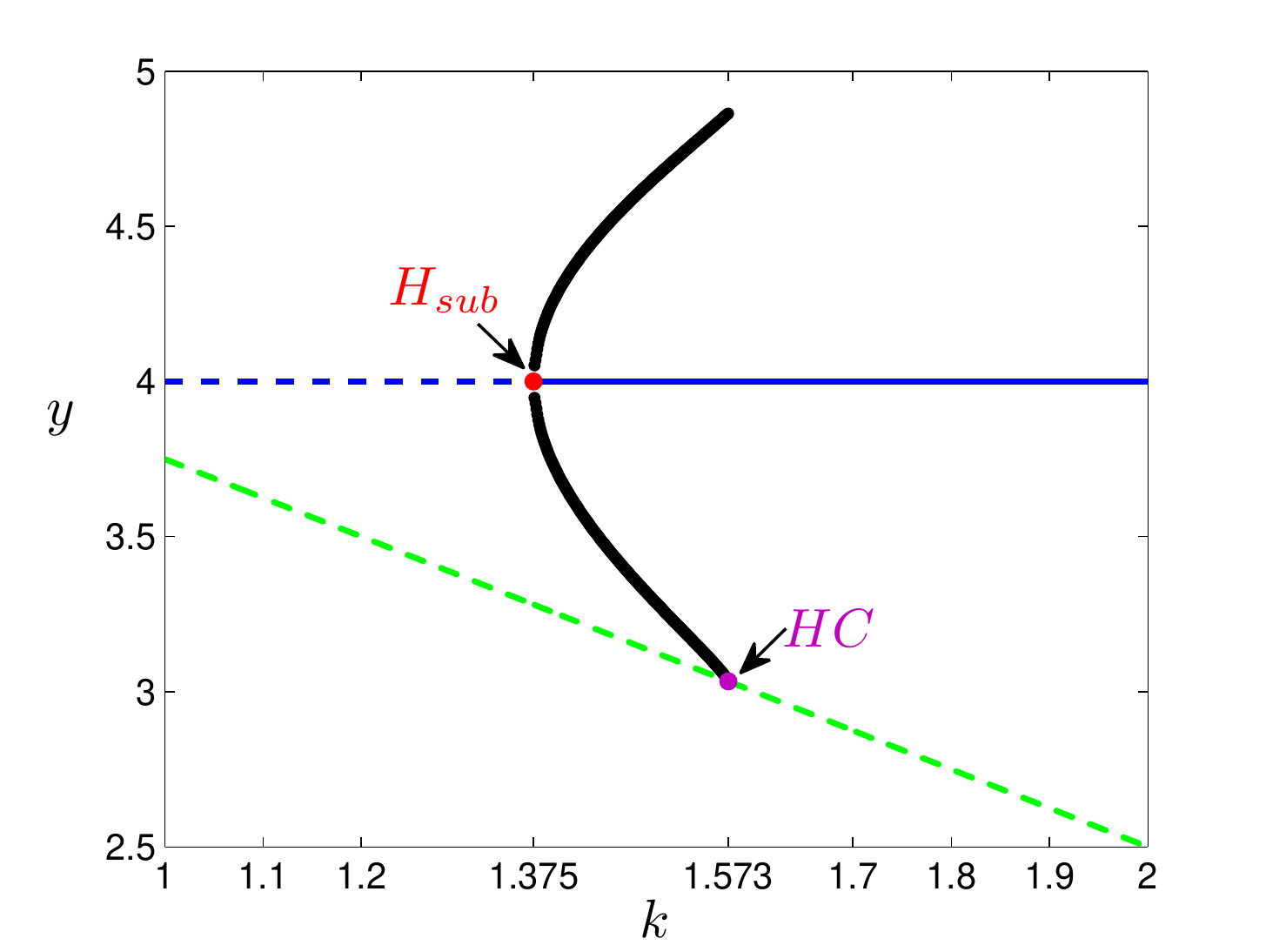}}
 		\caption{Bifurcations Diagrams Hopf and Homoclinic.}\label{fig_diag_bif}
 	\end{center}
 \end{figure}
 \begin{figure}[h]
 	\begin{center}
 		\includegraphics[scale=0.55]{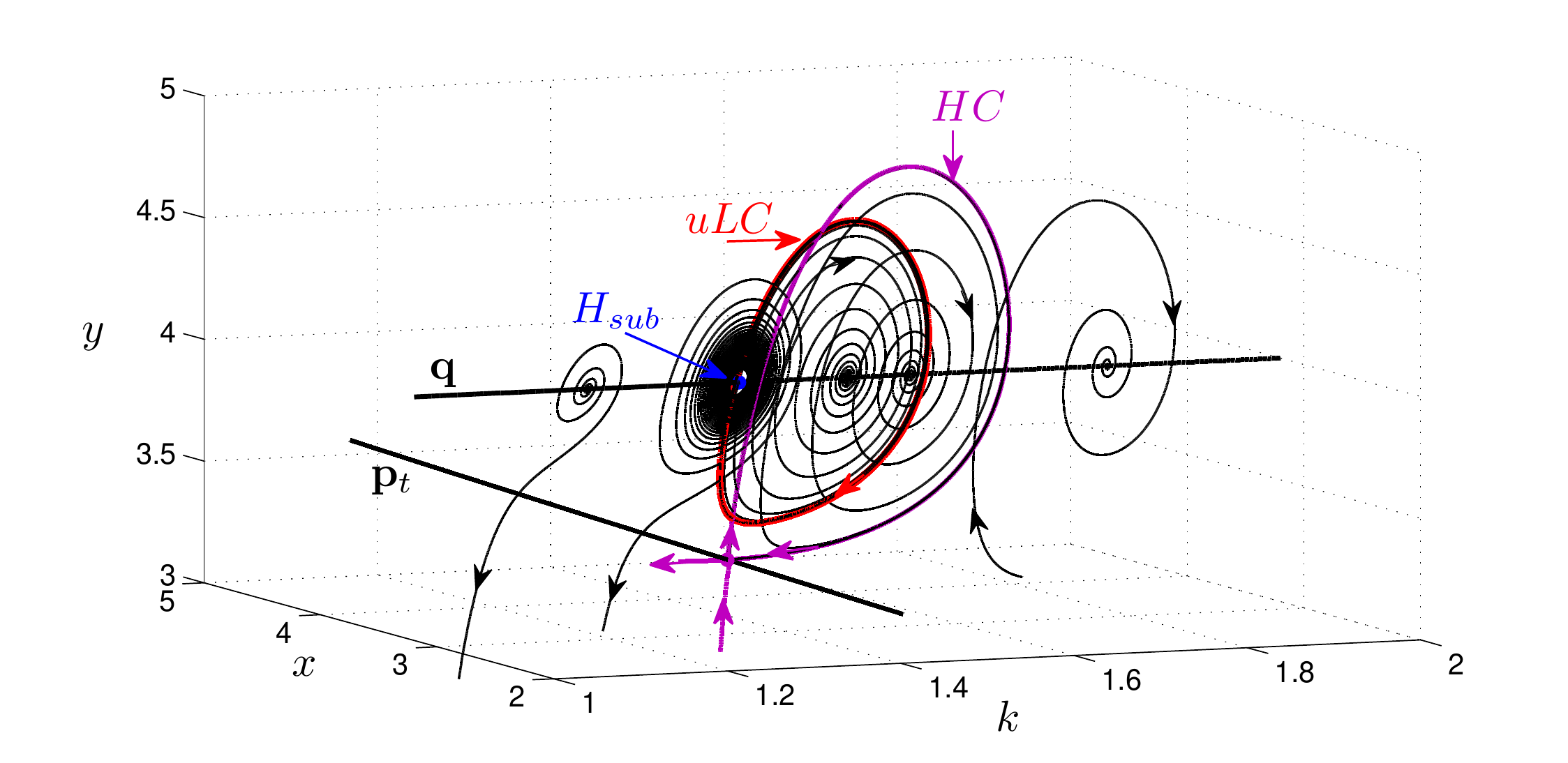}
 		\caption{Bifurcations diagram in $(x,y,k)-$space. }\label{fig_diag_bif_R3}
 	\end{center}
 \end{figure}

 Figure \ref{fig_diag_bif_R3} illustrates the evolution of the pseudo-focus $\mathbf{q}$, of the two-fold singularity $\mathbf{p}_t$ and of the unstable limit cycle (uLC), in relation to the parameter $k$. The pseudo-focus $\mathbf{q}$ is unstable before the subcritical Hopf bifurcation ($H_{sub}$) and stable after it. The unstable limit cycle surrounds $\mathbf{q}$ and there exists when $k\in(1.375,1.573)$. For $k=1.573$ it disappears colliding to $\mathbf{p}_t$ in a homoclinic loop (HC). 
 
 We summarize the dynamics on the diagram of Figure \ref{fig_set_bif}, where we get: the two points BT, given by $(a_-,y_ra_-)$ and $(a_+,y_ra_+)$ with $a_{\pm}$ given in \eqref{eq_amais_menos}, represent two Bogdanov-Takens bifurcations of $\mathbf{q}$; the blue curve, given by $k=k_H(a)$ with $k_H$ as \eqref{kHopf}, indicates the subcritical Hopf bifurcation ($H_{sub}$) of $\mathbf{q}$; the red curve, of equation $k=ay_r$, indicates the  transcritical bifurcation (T) involving the two-fold singularity $\mathbf{p}_t$ and the pseudo-equilibrium $\mathbf{q}$; the purple curve, numerically obtained, indicates the Homoclinic loop bifurcation (HC) of $C$; and the green curve indicates the transition of the pseudo-equilibrium $\mathbf{q}$ from a node to a focus. 

  \begin{figure}[h!]
  	\begin{center}
  		\includegraphics[scale=0.6]{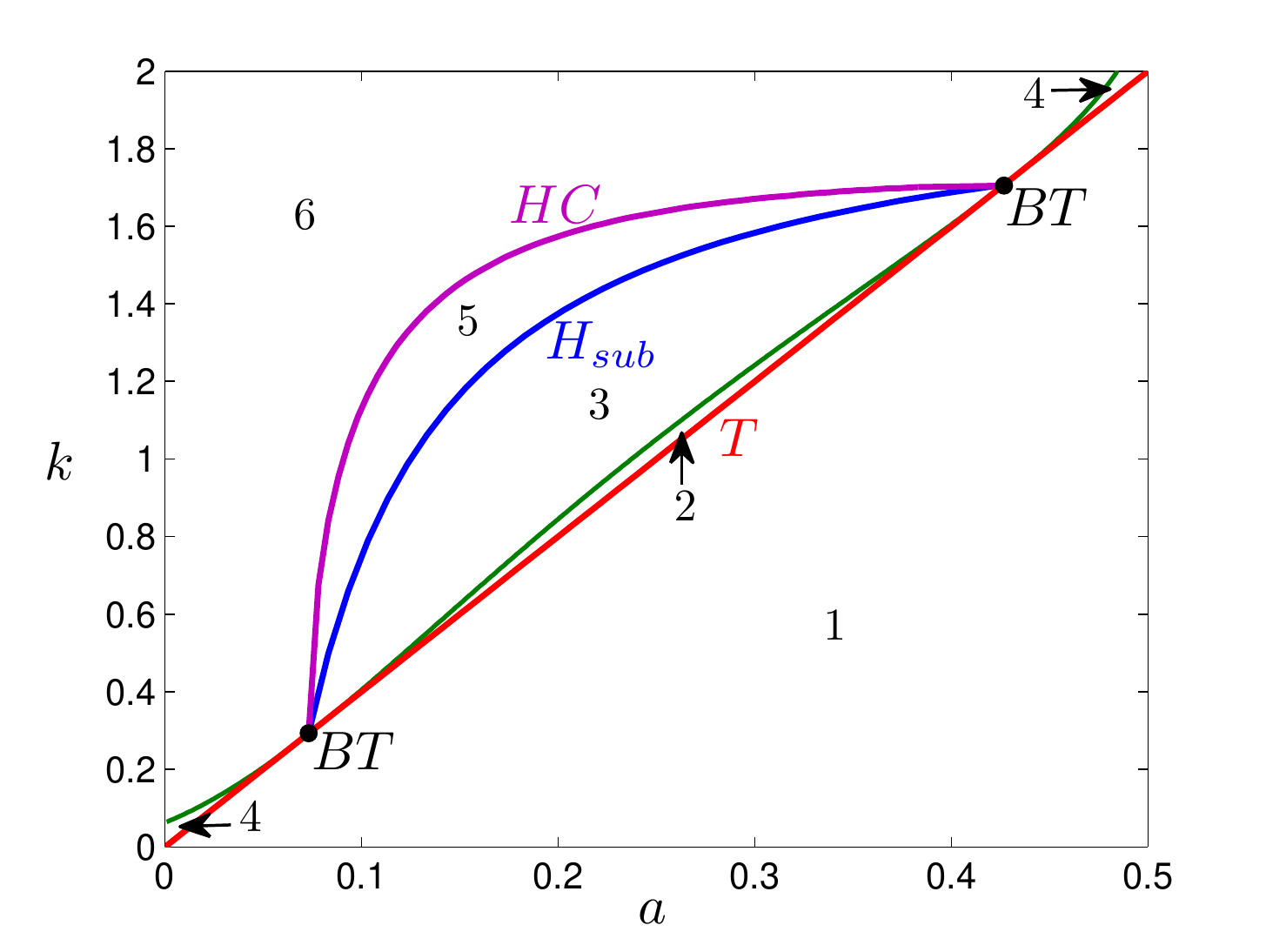}
  		\caption{Bifurcations set in $(a,k)-$plane.}\label{fig_set_bif}
  	\end{center}
  \end{figure}
  
    A point $(a,k)$ on region 1 means that $\mathbf{q}$ is a pseudo-saddle, on region 2 means that it is an unstable pseudo-node, on region 3 means that it is an unstable pseudo-focus, on region 4 means that it is a stable pseudo-node and on regions 5 and 6 means that it is a stable pseudo-focus (on region 5 there exists an unstable limit cycle). Moreover, on region 1 we get $\mathbf{q}\in\Sigma^e$ and on regions 2, 3, 4, 5 and 6 we get $\mathbf{q}\in\Sigma^s$.
 
  Based on this bifurcations analysis, we can determine an appropriate value for the control parameter $k$, from a prior knowledge of the variation range of load parameter $a$, such that the boost converter maintains the desired operating point after a load disturbance. So we should choose a value to $k$ in the region 6 (safe region) shown in Figure \ref{fig_set_bif}.

\section{Conclusion}\label{conclusao}

In this paper, we proved the existence of the bifurcations Hopf and Homoclinic Loop in the boost converter model with sliding mode control and Washout filter. The Hopf bifurcation occurs in the sliding vector field and is analogous to the standard case. The limit cycle that arises from the Hopf bifurcation is unstable and confined to the switching manifold. The Homoclinic Loop bifurcation occurs when the limit cycle disappears to touch visible-invisible two-fold point, whose dynamics in the sliding region is of the saddle type. The homoclinic loop has sliding segment which itself closes at the two-fold singularity.

The result of the bifurcation analysis was summarized in the $(a, k)$-plane of parameters. That way, is possible to choose an appropriate value for the control parameter in order to ensure operating point stability and prevent the birth of limit cycle around him, even after a change in the load parameter $a$.

\vspace{1cm}

%=====================================================================
%=====================================================================
%=====================================================================

{\small\noindent{\textbf{Acknowledgments.} The first author is supported by CAPES-Brazil. The second author is partially supported by grant 2014/02134-7, S\~{a}o Paulo Research Foundation (FAPESP) and a CAPES-Brazil grant number 88881.030454/2013-01 from the program CSF-PVE. The third author is supported by FAPEG-BRAZIL grant 2012/10 26 7000 803 and PROCAD/CAPES grant 88881.0 684 62/2014-01. The fourth author is supported by CNPq/Universal and by CAPES-BRAZIL. The second and third authors are supported by CNPq-BRAZIL grant 478230/2013-3 and all authors are supported by CNPq-BRAZIL grant 443302/2014-6. This work is partially realized at UFG as a part of project numbers 35796, 35797 and 040393.}}

\end{document}